\theoremstyle{plain}
\newtheorem{theorem}{Theorem}[section]
\newtheorem{proposition}[theorem]{Proposition}
\newtheorem{remark}[theorem]{Remark}
\newtheorem{definition}[theorem]{Definition}
\newtheorem{example}[theorem]{Example}
\newcommand{\rk}{\mathrm{rank}}
\def\Q{\mathbb Q}
\newcommand\sO{{\mathcal O}}
\def\pee#1{\hbox{$ {\mathbf P}^{#1}$}}
  \def\peen{\hbox{$ {\mathbf  P}^n$}}
  \def \tab#1{\kern #1 truein}
  \def\E{\hbox{${\cal E}$}}
  \def\F{\hbox{${\cal F}$}}
  \def\Q{\hbox{${\cal Q}$}}
  \def\O#1{\hbox{${\cal O}_{#1}$}}  
\def\mapright#1{\smash{
   \mathop{\longrightarrow}\limits^{#1}}}
\begin{document}
 \title{Cohomological Characterization of
Vector Bundles on Grassmannians of Lines} 
\author{Enrique Arrondo\thanks{Supported in part by the Spanish
Ministry of Education through the research project MTM2006-04785} and
Francesco Malaspina
\vspace{6pt}\\
{\small   Universidad Complutense de Madrid}\\ 
{\small\it Plaza de Ciencias, 3. 
28040 Madrid , Spain}\\ 
{\small\it e-mail: arrondo@mat.ucm.es }\\
\vspace{6pt}\\ 
 {\small  Politecnico di Torino}\\
{\small\it  Corso Duca degli Abruzzi 24, 10129 Torino, Italy}\\
{\small\it e-mail: malaspina@calvino.polito.it}}    \maketitle \def\thefootnote{}
\footnote{\noindent Mathematics Subject Classification 2000: 14F05, 14J60.
\\  Keywords: Universal Bundles on Grassmannians,
Castelnuovo-Mumford regularity.}
  
\begin{abstract} We introduce a notion of regularity for 
coherent sheaves on Grassmannians of lines. We use this notion to
prove some extension of Evans-Griffith criterion to characterize
direct sums of line bundles. We also give a
cohomological characterization of exterior and symmetric powers of
the universal bundles of the Grassmannian.
\end{abstract}  
  
\section*{Introduction}

The notion of Mumford-Castelnuovo regularity of sheaves on the
projective space, introduced in \cite{m}, has shown a very powerful
tool, especially to study vector bundles. This theory allows to
prove easily Horrocks criterion to characterize direct sums of line
bundles as those bundles without intermediate cohomology, and its
improvement by Evans-Griffith depending on the rank of the vector
bundle.
There have been several generalizations of this notion of regularity
to other ambient spaces such as Grassmannians (\cite{c}), products of
projective spaces (\cite{hw}, \cite{cm2}) or quadrics (\cite{BM}). In
most of the cases, the starting point is some variant of the
Beilinson spectral sequence, so that the notion of regularity
consists of a finite number of cohomological vanishings. Such a
notion has a nice behaviour, in particular it can be proved
that, if a coherent sheaf $\F$ is regular, so is any positive twist
of it.

An easy approach to the Mumford-Castelnuovo regularity on the
projective space is through the Koszul exact sequence, obtained from
the Euler exact sequence. In fact, the definition of regularity of
a sheaf can be done by imposing the vanishing of some cohomology of
the terms appearing in the Koszul exact sequence twisted by the
sheaf. In this paper, we explore this approach for Grassmannians of
lines (the right generalization of the Koszul exact sequence becomes
too complicated; this is why we concentrate in these particular
Grassmannians). In order to make the theory to work well, we will
need to impose in the definition the property that any positive
twist of a regular sheaf is still regular. This means that our
notion, consists of infinitely many cohomological vanishings. This
will not be, however, a problem for the applications we have in mind
(and in fact our definition include some vector bundles which are
not regular in the sense of \cite{c}).

We dedicate a first section to recall all the preliminaries we will
need for the Grassmannians of lines, with special attention to the
universal bundles and their cohomological properties. We will also
determine the right generalization to these Grassmannians of the
Koszul exact sequence.

In the second section we introduce our notion of regularity, which 
we will call $G$-regularity. We show that the natural candidates
coming from the universal bundles are $G$-regular (Example
\ref{esempi}) and we prove that this notion satisfies analogue
properties to the Mumford-Castelnuovo regularity (Proposition
\ref{newreg}). We also remark that, when $n=2$, i.e. when the
Grassmannian is a projective plane, $G$-regularity coincides with
Mumford-Castelnuovo regularity (Example \ref{piano}) and that, when
$n=3$, i.e. the Grassmannian is a quadric, the notion
of $G$-regularity containd the stronger notion of regularity given
in \cite{BM}. We finish this section with the first strong
application of our theory: a generalization of Evans-Griffith
criterion to characterize direct sums of line bundles (Theorem
\ref{eg}).

In the last section we prove our main results. We first give two
criteria that, with a finite number of cohomological vanishings,
imply that a vector bundle contains as a direct summand an
exterior power of one universal bundle (Theorem \ref{caratt-wed}) or
a symmetric power of the other universal bundle (Theorem
\ref{caratt-sim}). With the same techniques of those results, we
also give a cohomological characterization of those vector bundles
that are direct sums of twists of the above exterior and symmetric
powers (Theorem \ref{mt}). In particular, for $n=3$ we reobtain for
the four-dimensional quadric the characterizion of the vector
bundles without intermediate cohomology of \cite{Kn}, while for
$n=4$ we reobtain the characterization of direct sums of line
bundles and twists of the universal bundles or their duals given in
\cite{AG}.
     
\section{Preliminaries}
     
Throughout the paper $\peen$ will denote the projective space
consisting of the one-dimensional quotients of the
$(n+1)$-dimensional vector space $V$, while $G(1,n)$ (frequently
denoted just by $G$) will be the Grassmann variety of lines in
$\peen$. We recall the universal exact sequence on
$G=G(1,n)$:
\begin{equation}\label{u}
0\to S^\vee\mapright{\varphi} V\otimes{\cal O}_G\mapright{\psi} Q\to 0 
\end{equation}
defining the universal bundles $S$ and $Q$ over $G$, of respective ranks
$n-1$ and $2$. We will also write
$\sO_G(1)=\bigwedge^2Q\cong\bigwedge^{n-1}S$. In particular, we have
natural isomorphisms
\begin{equation}\label{dual-Q}
S^jQ^\vee\cong(S^jQ)(-j)
\end{equation}
(where $S^j$ denotes the $j$-th symmetric power) and
\begin{equation}\label{dual-S}
\bigwedge^j S^{\vee}\cong\bigwedge^{n-1-j}S(-1)
\end{equation}
Recall that the Pl\"ucker embedding of $G$ is defined by the quotient
$\bigwedge^2V\otimes\sO_G\mapright{\wedge^2\psi}\sO_G(1)$, or equivalently
by the quotient
$\bigwedge^{n-1}V^*\otimes\sO_G\mapright{\wedge^{n-1}\varphi^t}\sO_G(1)$.

The universal sequence \eqref{u} is the analogue in $G$ of the Euler
sequence in the projective space. The long Koszul exact sequence in the
projective space comes by taking the top exterior product in the left map
of the Euler sequence, while the taking smaller exterior products produces
the Koszul exact sequence truncated at the left. In the case of
Grassmannians of lines, for any $j\le n-1$, taking the $j$-th exterior
powers of $\varphi$ in \eqref{u} produces a long exact sequence
\begin{align*}0\to\bigwedge^j
S^{\vee}\to\bigwedge^j V\otimes{\cal
O}_G\to\bigwedge^{j-1}V\otimes
Q\to\dots\to\bigwedge^2V\otimes S^{j-2}Q\to V\otimes
S^{j-1}Q\to S^j Q\to 0.\\
(R_j)
\end{align*}
Dualizing $(R_j)$ and using the canonical isomorphisms \eqref{dual-Q}
we get another exact sequence
\begin{align*}
0\to S^j Q(-j)\to V^*\otimes S^{j-1}Q(-j+1)\to\dots
\to\bigwedge^{j-1}V^*\otimes
Q(-1)\to\bigwedge^j V^*\otimes\O{G}\to\bigwedge^j S\to0\\
(R^\vee_j)
\end{align*}

Observe now that we can glue $(R^\vee_{n-1-j})$ twisted by
$\O{G}(-1)$ with $(R_j)$ and,
\def\cancellato{
 we obtain
\begin{align*} 0\to S^{n-1-j}Q(-n+j)\to\dots\to\bigwedge^{n-1-j}V^*\otimes{\cal O}_G(-1)\to\bigwedge^j V\otimes{\cal O}_G\to\dots\to S^j Q\to 0\\
(W_j)
\end{align*}
Moreover we can glue $(R_j)$ twisted by $\O{G}(-j)$ with
$(R^\vee_j)$ and we have 
\begin{align*} 0\to\bigwedge^j
S^{\vee}(-j)\to\bigwedge^j V\otimes{\cal
O}_G(-j)\to\bigwedge^{j-1}V\otimes
Q(-j)\to\dots\to\bigwedge^j V^*\otimes\O{G}\to\bigwedge^j S\to0\\
(W'_j)
\end{align*}

In particular, 
}
when $j=n-1$, we get the analogue of the
Koszul exact sequence:
\begin{align}\label{koszul}
\hskip -4cm0\to\O{G}(-n)\to\bigwedge^{n-1}V\otimes\O{G}(-n+1)\to
\bigwedge^{n-2}V\otimes Q(-n+1)\to\dots\cr
\hskip -0cm\dots\to 
\bigwedge^2V\otimes S^{n-3}Q(-n+1)\to V\otimes S^{n-2}Q(-n+1)\to
V^*\otimes S^{n-2}Q(-n+2)\to\dots
\end{align}
$$\dots\to\bigwedge^{n-2}V^*\otimes
Q(-1)\to\bigwedge^{n-1}V^*\otimes\O{G}\to\O{G}(1)\to0.$$
As we will see, the relevant part of \eqref{koszul} is that the last
morphism is the evaluation morphism for $\O{G}(1)$, and that
\eqref{koszul} defines an element in ${\rm
Ext}^{2n-2}(\O{G}(1),\O{G}(-n))=H^{2n-2}(\O{G}(-n-1))=H^{2n-2}(\omega_G)$,
which is the Serre dual of the unit in $H^0(\O{G})$.

\def\niente{
\begin{align}\label{sym}0\to S^{n-1-j}Q(-n+1+j)\to V^*\otimes
S^{n-2-j}Q(-n+2+j)\to\bigwedge^2 V^*\otimes
S^{n-3-j}Q(-n+2+j)\to\dots&\cr
&\hskip-13cm\dots\to\bigwedge^{n-2-1}V^*\otimes
Q(-1)\to\bigwedge^{n-1-j}V^*\otimes{\cal O}_G\to\bigwedge^j S^{\vee}(1)\to
0.
\end{align}
Finally, glueing together \eqref{wed} and \eqref{sym} obtain a long exact
sequence
\begin{equation}\label{short}0\to
S^{n-1-j}Q(-n+j)\to\dots\to\bigwedge^{n-1-j}V^*\otimes{\cal
O}_G(-1)\to\bigwedge^j V\otimes{\cal O}_G\to\dots\to S^j Q\to
0.\end{equation}
}

\begin{remark}\label{coomologia}{\rm
We recall that $\bigwedge^jS$ and $S^jQ$ with $0\le j\le n-2$ have
no intermediate cohomology (we say that $E$ on $G$ has no
intermediate cohomology if, for
$i=1,2,\dots,2n-3$ we have the vanishing $H^i_*(E)=0$, i.e.
$H^i(E(k))=0$ for each integer $k$). This is not the case for
$S^jQ$ with
$j\ge n-1$. For example, the exact sequence $(R^\vee_{n-1})$
produces a nonzero element in Ext$^{n-1}({\cal
O}_G(1),S^{n-1}Q(-n+1))=H^{n-1}(S^{n-1}Q(-n))$. In fact this is the
only nonzero intermediate cohomology of $S^{n-1}Q$, while
$(R_j)$ shows that the only nonzero
intermediate cohomology of $S^jQ$ with $j\ge n-1$ is
$H^{n-1}(S^jQ(-n-k))$, with
$k=0,1,\dots,j-n+1$ (observe that, by Serre duality and
\eqref{dual-Q}, it is enough to check the cohomology up to order
$n-1$). We recall that, if $i\le j$, there is a decomposition
\begin{equation}\label{sym-sym} 
S^iQ\otimes S^jQ=
S^{i+j}Q\oplus(S^{i+j-2}Q)(1)\oplus(S^{i+j-4}Q)(2)
\oplus\dots\oplus(S^{j-i}Q)(i)
\end{equation}
so that, again, the only nonzero intermediate cohomology
of any $S^iQ\otimes S^jQ$ is $H^{n-1}(S^iQ\otimes
S^jQ(-n-k))$ for some $k\ge0$. Similarly, using this we deduce
that, for any $i\le n-2$,
$\bigwedge^jS\otimes S^iQ$ has no intermediate cohomology
except for
$$H^{n-1-j}(\bigwedge^jS\otimes S^{n-j-1}Q(-n+j))={\rm
Ext}^{n-1-j}(S^{n-j-1}Q,\bigwedge^jS(-1))$$ (which is generated by
the exact sequence
$(R_{n-1-j})$) and
$$H^{2n-2-j}(\bigwedge^jS\otimes
S^jQ(-n-1))={\rm Ext}^{2n-2-j}(S^jQ(n-j),\bigwedge^jS(-1))$$
(which is generated by the exact sequence obtained by glueing
$(R_{n-1-j})$, $(R^\vee_{n-1-j})$ twisted by $\O{G}(n-1-j)$ and
$(R_j)$ twisted by $\O{G}(n-j)$.

In general, we will call unit element to the extension generating
one of the above cohomological groups.  
 
}\end{remark}

\def\niente{
In the above sequence appear $(n-j+1)+(j+2)-2=n+1$ bundles. Now we
consider the sequence \eqref{wed} twisted by $(-n+1+j)$ (where  we take
$j=n-1-j$-th exterior powers) followed by the above sequence. We
obtain     

\begin{equation}\label{long}
0\to\bigwedge^{n-1-j} S^{\vee}(-n+j)\to\dots\to V\otimes
S^{n-j-2}Q(-n+j)\to\end{equation}
$$\hskip4cm\to V^*\otimes S^{n-2-j}Q(-n+1+j)\to\dots\to   S^j Q\to 0.$$
In this sequence appear  $(n+1)+(n-j+1)-2=2n-j$ bundles.\\
}

\section{$G$-regularity and Evans-Griffith criterion on $G(1,n)$}

Inspired by \eqref{koszul}, we give the following definition:

\begin{definition}\label{d1} 

{\rm We say that a vector bundle $E$ on $G$ is
{\it $G$-regular} if, for any $k\ge0$, the following conditions hold:
  \begin{enumerate}
  \item[(i)] $H^1(\F\otimes Q(k-1))=H^2(\F\otimes
S^2Q(k-2))=\dots=H^{n-2}(\F\otimes S^{n-2}Q(k-n+2))=0$;
  \item[(ii)] $H^{n-1}(\F\otimes S^{n-2}
Q(k-n+1))=H^n(\F\otimes S^{n-3}
Q(k-n+1))=\dots\\ \\
{}\hspace{4cm}\dots=H^{2n-4}(\F\otimes 
Q(k-n+1))=H^{2n-3}(\F(k-n+1))=0$;
\item[(iii)] $H^{2n-2}(\F(k-n))=0$.
\end{enumerate}
We will say that $\F$ is {\it $m$-$G$-regular} if $\F(m)$ is
$G$-regular. We define the {\it $G$-regularity} of $\F$, $G$-$reg(\F)$,
as the least integer $m$ such that $F(m)$ is $G$-regular. We set $G$-$reg
(\F)=-\infty$ if there is no such an integer. 

}\end{definition}
     
\begin{example}\label{esempi}{\rm 
We get from Remark \ref{coomologia} that the trivial bundle $\O{G}$,
any $\bigwedge^jS$ with $j\in\{1,\dots,n-2\}$ or any $S^jQ$ are
$G$-regular, and in fact their $G$-regularity is zero. This shows
that the definition of Chipalkatti in \cite{c} is much more
restrictive than ours, since $S$ is not regular with his
definition.

\medskip
   
On the other hand, if $T$ is the tangent bundle of the Pl\"ucker
ambient space of $G$, it follows from the restriction of the Euler
exact sequence that $T_{|G}(-1)$ is $G$-regular, while $T_{|G}(-2)$ is
not (because $H^{2n-3}(T_{|G}(-n-1))\ne0$), hence
$G$-$reg(T_{|G})=-1$.

}\end{example}  

We can now prove that our definition of regularity has the right
properties one should expect:

\begin{proposition}\label{newreg} If $\F$ is a $G$-regular coherent
sheaf on $G=G(1,n)$ then, for any $k\geq 0$:
\begin{itemize} 
\item[(i)] $\F(k)$ is $G$-regular.
\item[(ii)] $H^{2n-3}(\F\otimes\bigwedge^jS^\vee(k-n))=0$
for $j=1,\dots,n-2$, and $H^{n-2}(\F\otimes
S^{n-1}Q(k-n+1))=0$.
\item[(iii)] For $j=1,\dots,n-1$, the
multiplication map 
$H^0(\F(k))\otimes H^0(\bigwedge^jS)\to
H^0(\F\otimes \bigwedge^jS(k))$ is surjective.
\item[(iv)] The multiplication map $H^0(\F(k))\otimes
H^0(\O{G}(l))\to H^0(\F(k+l))$ is surjective for any
$l\ge1$.
\item[(v)] $\F(k)$ is generated by its global sections.
\end{itemize}
\end{proposition}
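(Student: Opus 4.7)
The plan is to establish the five assertions in turn, exploiting the resolutions $(R_j)$, $(R^\vee_j)$ and the Koszul-type complex \eqref{koszul} together with the cohomological vanishings built into Definition \ref{d1}. The central tool throughout is the classical cohomology-chasing principle: in an exact sequence $0\to P_0\to P_1\to\cdots\to P_l\to 0$ of coherent sheaves, appropriate vanishings of $H^*(P_s)$ in shifted degrees force the vanishing of $H^i$ of the extreme term.

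Part (i) is immediate, since Definition \ref{d1} is uniformly quantified over $k\ge 0$: the conditions required for $G$-regularity of $\F(k)$ are a subfamily of those guaranteed by the $G$-regularity of $\F$. For part (ii), one tensors $(R_j)$ with $\F(k-n)$ and chases: the left-most term is $\bigwedge^j S^\vee\otimes\F(k-n)$, and, after stripping off trivial factors, the other terms reduce to bundles of the form $\F\otimes S^tQ(k-n)$ whose relevant vanishings are supplied by conditions (ii) and (iii) of Definition \ref{d1}. The second equality $H^{n-2}(\F\otimes S^{n-1}Q(k-n+1))=0$ is obtained by an analogous chase through a suitable twist of $(R_{n-1})$ or of $(R^\vee_{n-1})$, drawing on the same pool of vanishings.

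For part (iii), tensor $(R^\vee_j)$ with $\F(k)$: the resulting complex terminates in the surjection $\bigwedge^jV^*\otimes\F(k)\twoheadrightarrow \bigwedge^j S\otimes\F(k)$, and surjectivity of the induced multiplication map on global sections is equivalent to $H^1(K)=0$ for the kernel $K$. A cohomology chase through the remaining terms of $(R^\vee_j)\otimes\F(k)$ reduces $H^1(K)=0$ to the vanishings $H^t(\F\otimes S^tQ(k-t))=0$ for $t=1,\dots,j$, which follow from condition (i) of the definition when $t\le n-2$, and from the auxiliary vanishing established in (ii) when $t=n-1$ (the only case that requires it, namely $j=n-1$). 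Part (iv) at $l=1$ is exactly the case $j=n-1$ of (iii), using $\bigwedge^{n-1}S\cong\O{G}(1)$; the general case $l\ge 1$ follows by iteration, together with the projective normality of $G$ in its Pl\"ucker embedding, which supplies the surjection $H^0(\O{G}(1))^{\otimes l}\twoheadrightarrow H^0(\O{G}(l))$.

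Finally, (v) is a formal consequence of (iv) combined with Serre's theorem. For each $k\ge 0$, choose $m\gg 0$ so that $\F(k+m)$ is globally generated; the composition of the evaluation $H^0(\F(k+m))\otimes\O{G}\twoheadrightarrow \F(k+m)$ with the surjection $H^0(\F(k))\otimes H^0(\O{G}(m))\twoheadrightarrow H^0(\F(k+m))$ coming from (iv) is surjective, and factors through the map $H^0(\F(k))\otimes \O{G}(m)\to \F(k+m)$, forcing the latter to be surjective; tensoring with $\O{G}(-m)$ then yields the global generation of $\F(k)$. The main obstacle in the whole proof lies in the careful bookkeeping required for (ii) and (iii): the cohomological degrees appearing in each chase must align precisely with those available from Definition \ref{d1}, and the auxiliary vanishings of (ii) are indispensable to close the chase at $j=n-1$ in part (iii).
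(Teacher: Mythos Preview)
Your proposal is correct and follows essentially the same route as the paper: part (i) is immediate from the definition; part (ii) is obtained by chasing through $(R_j)\otimes\F(k-n)$ for $j=1,\dots,n-1$; part (iii) from $(R^\vee_j)\otimes\F(k)$; part (iv) by iteration from the case $j=n-1$ of (iii); and part (v) from (iv) together with Serre's theorem via the standard commutative-square argument.

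Two small remarks. First, in (iv) the paper's induction does not actually invoke projective normality of $G$: the commutative square
\[
\begin{matrix}
H^0(\F(k))\otimes H^0(\sO_G(l-1))\otimes H^0(\sO_G(1))&\to&H^0(\F(k+l-1))\otimes H^0(\sO_G(1))\\
\downarrow&&\downarrow\\
H^0(\F(k))\otimes H^0(\sO_G(l))&\to&H^0(\F(k+l))
\end{matrix}
\]
already gives surjectivity of the bottom map from that of the top and right maps, without knowing that the left vertical is onto. Your appeal to projective normality is valid but unnecessary. Second, be careful with the bookkeeping in (iii) at $j=n-1$: the vanishing the chase actually requires is $H^{n-1}(\F\otimes S^{n-1}Q(k-n+1))=0$, not the $H^{n-2}$ appearing in the statement of (ii). This $H^{n-1}$ vanishing does follow, but from a separate chase through $(R_{n-1})\otimes\F(k-n+1)$ using conditions (ii) and (iii) of Definition \ref{d1}, rather than directly from the proposition's part (ii).
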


\begin{proof} Part (i) comes from the definition of regularity.
Part (ii) comes by taking cohomology in $(R_j)$
tensored with $\F(n-k)$ for, respectively,
$j=1,\dots,n-1$. Part (iii) follows by taking cohomology
in $(R^\vee_j)$ tensored with $\F$ and having in mind the
identification $H^0(\bigwedge^jS)=\bigwedge^jV^*$.

\medskip

We will prove (iv) by induction on $l$, the case $l=1$
being (iii) for $j=n-1$. The statement for a general $l$
comes from the commutative diagram
$$\begin{matrix}
H^0(\F(k))\otimes H^0(\sO_G(l-1)\otimes H^0(\sO_G(1))
&\to&H^0(\F(k+l-1)\otimes H^0(\sO_G(1))\cr
\downarrow&&\downarrow\cr
H^0(\F(k))\otimes H^0(\sO_G(l))&\to&H^0(\F(k+l))
\end{matrix}$$
using that the top map is surjective by induction hypothesis and the right
map is surjective by applying again (iii) for $j=n-1$.

\medskip

To prove (v), we consider a sufficiently large twist such
that $\F(k+l)$ is generated by its global section.
Consider the commutative diagram
$$\begin{matrix}
H^0(\F(k))\otimes H^0(\sO_G(l))\otimes\sO_G
&\to&H^0(\F(k+l))\otimes\sO_G\cr
\downarrow&&\downarrow\cr
H^0(\F(k))\otimes\sO_G(l)&\to&\F(k+l)
\end{matrix}$$
in which the top map is surjective by (iv) and the right
map is surjective because  $\F(k+l)$ is globally
generated. This yields the surjectivity of
$H^0(\F(k))\otimes\sO_G(l)\to\F(k+l)$, which implies that
$\F(k)$ is generated by its global sections.
\end{proof}

\begin{example}\label{piano}{\rm
If $n=2$, then $G=G(1,2)$ is a projective plane, and $\F$ is $G$-regular
when, for any $k\geq 0$,
$H^1(F(k-1))=H^2(F(k-2))=0$, which coincides with the Castelnuovo-Mumford
regularity on ${\bf {P}}^2$.
}\end{example}

 \begin{example}\label{Q4}{\rm 
If $n=3$ then $G=G(1,3)$ is a quadric hypersurface in $\pee{5}$, where we
have the notion of Qregularity introduced in
\cite{BM}. Specifically, $\F$ is Qregular if
$H^1(\F(-1))=H^2(\F(-2))=H^3(\F(-3))=0$ and $H^4(\F\otimes
Q(-4))=H^4(\F\otimes S(-4))=0$. In particular, $T_{|G}(-1)$ is $G$-regular
but not Qregular (see Example \ref{esempi}), showing that Qregularity
is a stronger condition (in fact, it can be proved that Qregularity implies
$G$-regularity).
\def\niente{
In fact $F$ is $G$-regular if for any $k\geq 0$
$H^3(F(k-2))=H^4(F(k-3))=0$ and $H^1(F\otimes Q(k-1))=H^2(F\otimes
Q(k-2))=0$.\\
 The two spinor bundles are $\Sigma_1=S$ and $\Sigma_2=Q$. 
$F$ is Qregular if $H^1(F(-1))=H^2(F(-2))=H^3(F(-3))=0$ and $H^4(F\otimes
Q(-4))=H^4(F\otimes S(-4))=0$. We have $H^4(F\otimes Q(-3))=H^4(F\otimes
S^\vee(-3))=0$ and hence $H^4(F(-3))=0$. $H^4(F\otimes
S^\vee(-3))=H^3(F(-3))=0$ implies $H^3(F\otimes Q(-3))=0$. By a symmetric
argument $H^3(F\otimes S(-3))=0$. $H^3(F\otimes S^\vee(-2))=H^3(F(-2))=0$
implies $H^2(F\otimes Q(-2))=0$. By a symmetric argument $H^2(F\otimes
S(-2))=0$.
  $H^2(F\otimes S)=H^1(F))=0$ implies $H^(F\otimes Q^\vee)=H^1(F\otimes Q(-1))=0$. Then, using the fact that $F(k)$ is Qregular for any $k\geq 0$, we have that $F$ is  $G$-regular.\\
}
}\end{example}

With our notion of regularity we can prove an analogue of
Evans-Griffith theorem, improving the known results (see \cite{ml} and \cite{o2}) for the total
splitting of vector bundles:

\begin{theorem}\label{eg} A vector bundle $E$ of rank $r$ on
$G=G(1,n)$ splits into a direct sum of line bundles if and only if the
following conditions hold:
\begin{enumerate}
  \item[(i)] $H^1_*(E\otimes Q)=H^2_*(E\otimes S^2Q)=\dots=
H^{n-2}_*(E\otimes S^{n-2}Q)=0$;
  \item[(ii)] $H^{n-1}_*(E\otimes S^{n-2}Q)=H^{n}_*(E\otimes
S^{n-3}Q)=\dots=H^{2n-3-i}_*(E\otimes S^i Q)=0$ with
$i=\big[\frac{2n-2}{r+1}\big]$.
\end{enumerate}
\end{theorem}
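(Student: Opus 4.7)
The plan is to argue by induction on the rank $r$. The rank-one case is trivial. The necessity of (i)--(ii) is immediate from Remark~\ref{coomologia}: for any direct sum of line bundles, each cohomology group appearing in (i)--(ii) reduces to intermediate cohomology of some $S^jQ$ with $j\le n-2$, or to $H^{2n-3}$ of a line bundle on $G$, all of which vanish.

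For sufficiency, the idea is to produce, for a suitable twist, an exact sequence
$$0\to\O{G}(-a)\to E\to E'\to 0$$
with $E'$ locally free of rank $r-1$, apply the inductive hypothesis to $E'$, and split the extension. I expect the construction of this sequence --- the extraction of a line subbundle --- to be the main obstacle. A general section of a globally generated rank-$r$ bundle on the $(2n-2)$-dimensional variety $G$ vanishes in codimension $r\le 2n-2$, so picking an arbitrary section will not do. The role of the bound $i=\bigl\lfloor\tfrac{2n-2}{r+1}\bigr\rfloor$ in condition (ii) is precisely to provide, via the Koszul-type sequences $(R_j)$ and $(R_j^\vee)$ twisted by $E$ together with the intermediate-cohomology vanishings of Remark~\ref{coomologia}, enough control on the dimensions of $H^0(E(a))$ for small $a$ to force the existence of a nowhere-vanishing section of $E(a)$ at the minimal admissible $a$. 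This is in the same spirit as the arguments of \cite{ml} and \cite{o2} for characterizing complete splittings.

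Granting the sequence, the remaining verifications should be a diagram chase. Tensoring $0\to\O{G}(-a)\to E\to E'\to 0$ with $S^jQ(k)$ for $1\le j\le n-2$ and using that $S^jQ(k-a)$ has no intermediate cohomology, the long exact sequence forces $H^p(E'\otimes S^jQ(k))$ to vanish in every bidegree where $H^p(E\otimes S^jQ(k))=0$; hence $E'$ inherits conditions (i) and (ii). Since $i'=\bigl\lfloor\tfrac{2n-2}{r}\bigr\rfloor\ge i$, strictly fewer vanishings are required at rank $r-1$, so the inductive hypothesis applies and $E'\cong\bigoplus_l\O{G}(b_l)$. Finally, the extension class lies in
$$\mathrm{Ext}^1(E',\O{G}(-a))=\bigoplus_l H^1(\O{G}(-a-b_l)),$$
which vanishes because every line bundle on $G(1,n)$ has trivial first cohomology (for $n\ge 2$). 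Therefore $E\cong\O{G}(-a)\oplus\bigoplus_l\O{G}(b_l)$, completing the induction.
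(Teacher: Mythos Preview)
Your proposal has a genuine gap at the step you yourself flag as the main obstacle: producing a nowhere-vanishing section of some twist $E(a)$. You do not carry this out; you only speculate that the hypotheses should give ``enough control on the dimensions of $H^0(E(a))$'' to force such a section. Dimension counts alone do not produce nowhere-vanishing sections, and nothing in the hypotheses (i)--(ii) gives direct information about $H^0$. So as written the argument is incomplete, and the mechanism you suggest is not the one that makes the rank bound $i=\lfloor\tfrac{2n-2}{r+1}\rfloor$ relevant.

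The paper's argument bypasses this difficulty entirely and is worth comparing. Normalize so that $E$ is $G$-regular but $E(-1)$ is not. Then $E$ is globally generated, so each $E\otimes S^iQ(k+1)$ is ample; Le~Potier's vanishing theorem now forces $H^{2n-3-i}(E\otimes S^iQ(k-n))=0$ whenever $2n-3-i\ge\rk(E\otimes S^iQ)=r(i+1)$, i.e.\ for $i<\tfrac{2n-2}{r+1}$. This is exactly the role of the bound in (ii): Le~Potier supplies the vanishings with $i$ below the threshold, and hypothesis (ii) supplies those at or above it. Together with (i), all $G$-regularity conditions for $E(-1)$ are met except possibly $H^{2n-2}(E(-n-1))$, which must therefore be nonzero. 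Serre duality turns this into $H^0(E^\vee)\ne0$, i.e.\ a nonzero morphism $E\to\O{G}$; since $E$ is globally generated, this forces $\O{G}$ to split off as a direct summand. One then recurses on the rank.

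So the paper never searches for a line subbundle of $E$; it instead produces a trivial quotient via Serre duality and then splits it off using global generation. Your final steps (checking that $E'$ inherits (i)--(ii), and that the extension splits) are fine, but they are not needed once one has $\O{G}$ as a direct summand rather than merely a subbundle.
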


\begin{proof} It is clear (see Remark \ref{coomologia}) that a
direct sum of line bundles satisfies (i) and (ii), so that we only
need to prove the converse. The statement is independent of twists by
a line bundle, so that we can assume that $E$ is $G$-regular but
$E(-1)$ is not. In particular, $E$ is globally generated, hence
$E\otimes S^i Q(k+1)$ is (very) ample for any $k\ge0$. This implies,
by Le Potier's vanishing theorem, 
$$H^j(E\otimes S^i Q(k+1)\otimes\O{G}(-n-1))=0$$ 
for $j\ge\rk(E\otimes S^i Q(k+1))$. Hence $H^{2n-3-i}(E\otimes
S^iQ(k-n))=0$ for $i\le\frac{2n-2}{r+1}-1$. This, together with
(i) and (ii), implies that $E(-1)$ satisfies all the conditions of
$G$-regularity except the vanishing of $H^{2n-2}(E(-n-1))$, which is
therefore different from zero. By Serre's duality, we get
$H^0(E^\vee)\ne0$, which together with the fact that $E$ is generated
by its global sections implies that $E$ splits as $E\cong\sO_G\oplus
E'$. The proof is completed by applying the same technique to $E'$,
and making a recursion on the rank.
\end{proof}

\begin{example} {\rm In the particular case $n=3$ our splitting criterion
reads as follows:}\\
Let $E$ be a vector bundle of rank $r$ on the
four-dimensional smooth quadric $Q_4$ such that
$$H^1_*(E\otimes Q)=H^2_*(E\otimes Q)=0$$ 
and, only if $r\ge4$, 
$$H^3_*(E)=0.$$
Then $E$ splits as a direct sum of line bundles.
\end{example}

\section{Characterization of the universal bundles on $G(1,n)$}

After Theorem \ref{eg}, one could have the temptation of
proceeding as in \cite{AG}, i.e. removing from the
statement of the Theorem the conditions not
satisfied by the universal bundles and try see whether
these fewer conditions characterize direct sums of line
bundles and twists of universal bundles. However, this
will not work, since Theorem \ref{eg} already contains
few hypotheses. For example, by Remark \ref{coomologia},
the condition not satisfied by $Q$ is $H^{n-1}_*(Q\otimes
S^{n-2}Q)=0$. However, if we remove that condition, also any $S^jQ$
satisfies the rest of the conditions, so that we cannot hope to
characterize the direct sum of line bundles and twist of $Q$ as
those bundles $E$ satisfying all the hypotheses of Theorem \ref{eg}
except $H^{n-1}_*(E\otimes S^{n-2}Q)=0$. This means that we will need
to add extra conditions to characterize such direct sums.

We will thus first characterize (with just a finite number of
cohomological vanishings) each of the bundles $S^jQ$ or
$\bigwedge^jS$. In a final result, we will put all these results
together to eventually classify direct sums of line bundles,
twists of $Q$ and twists of some $\bigwedge^jS$.

\begin{theorem}\label{caratt-wed}
Let $n\ge3$ and fix $j\in\{1,\dots,n-2\}$. Let $E$ be a vector bundle
on $G=G(1,n)$ such that:
\begin{enumerate}
\item[(i)] $H^{n-1-j}(E\otimes S^{n-1-j}Q(-n+j))\ne0$;
\item[(ii)] $H^1(E(-1))=H^2(E\otimes
Q(-2))=\dots=H^{n-1-j}(E\otimes S^{n-2-j}Q(-n+1+j))=0$;
\item[(iii)] $H^{n-1-j}(E\otimes S^{n-2-j}Q(-n+j))
=H^{n-j}(E\otimes S^{n-3-j}Q(-n+j))
=\dots=H^{2n-3-2j}(E(-n+j))
=0$;
\item[(iv)] $H^{2n-2-2j}(E(-n-1+j))=H^{2n-1-2j}(E\otimes
Q(-n-2+j))=
\dots=H^{2n-3-j}(E\otimes S^{j-1}Q(-n))=0$;
\item[(v)] $H^{2n-2-j}(E\otimes S^{j-1}Q(-n-1))
=H^{2n-1-j}(E\otimes S^{j-2}Q(-n-1))=
\dots=H^{2n-3}(E(-n-1))=0$.
\end{enumerate}
\smallskip
Then $E$ contains $\bigwedge^jS$ as a direct summand. In particular,
a vector bundle $E$ of rank $n-1\choose j$ on $G$ is isomorphic to
$\bigwedge^jS$ if and only if it satisfies (i), (ii), (iii), (iv),
(v).
\end{theorem}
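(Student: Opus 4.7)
The plan is to produce a direct summand $\bigwedge^jS\subseteq E$ by exhibiting a nonzero morphism $\phi\colon\bigwedge^jS\to E$ together with a nonzero morphism $\psi\colon E\to\bigwedge^jS$ whose composition is a nonzero multiple of the identity. Since $\bigwedge^jS$ is an irreducible homogeneous bundle we have $\mathrm{End}(\bigwedge^jS)\cong k$, so any nonzero endomorphism is a nonzero scalar of the identity, and $\phi$ will then automatically split. The second assertion in the statement will follow from a rank comparison in the resulting decomposition.

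For the construction of $\phi$ I will use (i) and (ii). I tensor the exact sequence $(R^\vee_{n-1-j})$, twisted by $\sO_G(-1)$, with $E$, obtaining
$$0\to E\otimes S^{n-1-j}Q(-n+j)\to V^*\otimes E\otimes S^{n-2-j}Q(-n+1+j)\to\cdots\to E\otimes\bigwedge^{n-1-j}S(-1)\to 0.$$
Breaking this into short exact sequences and iterating the connecting homomorphism, a direct check shows that the vanishings in (ii)---which at the $k$th internal term read precisely $H^k(E\otimes S^{k-1}Q(-k))=0$ for $k=1,\dots,n-1-j$---make the iterated connecting map
$$H^0\bigl(E\otimes\bigwedge^{n-1-j}S(-1)\bigr)\twoheadrightarrow H^{n-1-j}\bigl(E\otimes S^{n-1-j}Q(-n+j)\bigr)$$
surjective. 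Since the target is nonzero by (i), there exists a nonzero $\phi\in H^0(E\otimes\bigwedge^{n-1-j}S(-1))=\mathrm{Hom}(\bigwedge^jS,E)$ mapping to the class of (i).

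The construction of $\psi$ will be the Serre-dual mirror. By Serre duality, $\mathrm{Hom}(E,\bigwedge^jS)\cong H^{2n-2}(E\otimes\bigwedge^{n-1-j}S(-n-2))^\vee$, so it suffices to produce a nonzero class in the top cohomology group on the right. I apply an analogous iterated-connecting argument to the sequence $(R_j)$ twisted by $\sO_G(-n-1)$ and tensored with $E$: the vanishings in (v) now play the role of those in (ii), reducing the task to producing a nonzero class in $H^{2n-2-j}(E\otimes S^jQ(-n-1))$, and the vanishings in (iv) then allow this class to be obtained by further connecting back to the nonzero class $e$ of (i) through the glued sequence combining $(R_j)(-n-1)$ with $(R^\vee_{n-1-j})(-1)$ (each tensored with $E$) at their common term $E\otimes\bigwedge^{n-1-j}S(-1)$.

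Finally, the composition $\psi\circ\phi\in\mathrm{End}(\bigwedge^jS)\cong k$ can be computed, via the Yoneda product of the extension classes representing $\phi$ and $\psi$, as a scalar multiple of the unit element described in Remark~\ref{coomologia}. Condition (iii)---which enforces $H^{n-1-j+\ell}(E\otimes S^{n-2-j-\ell}Q(-n+j))=0$ for $\ell=0,\dots,n-2-j$---will eliminate the intermediate cohomology obstructions that could make this Yoneda product vanish, so $\psi\circ\phi$ is a nonzero scalar of the identity. The main obstacle is precisely this last step: carefully tracing the composition through the gluing of the exact sequences and identifying it with the generator of the unit cohomology, which requires delicate bookkeeping of connecting homomorphisms.
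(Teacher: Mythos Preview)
Your construction of $\phi$ via (i) and (ii) is correct and matches the paper exactly: tensor $(R^\vee_{n-1-j})(-1)$ with $E$ and use (ii) to make the iterated connecting map onto $H^{n-1-j}(E\otimes S^{n-1-j}Q(-n+j))$ surjective. The difficulties are all on the other side.

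First, a bookkeeping error: the gluing you describe is not well-posed. The sequence $(R_j)$ twisted by $\sO_G(-n-1)$ begins with $\bigwedge^jS^\vee(-n-1)\cong\bigwedge^{n-1-j}S(-n-2)$, which does not coincide with the end term $\bigwedge^{n-1-j}S(-1)$ of $(R^\vee_{n-1-j})(-1)$; so there is no common term at which to glue. Relatedly, the jump from $H^{n-1-j}$ to $H^{2n-2-j}$ has length $n-1$, and the conditions in (iv) alone only supply $j$ of the required vanishings; you need the $n-1-j$ conditions in (iii) as well. In the paper (iii), (iv), (v) are \emph{all} used to lift the Serre dual class, not just (iv) and (v).

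Second, and more seriously, producing a nonzero class in $H^{2n-2}(E\otimes\bigwedge^{n-1-j}S(-n-2))$ only tells you that $\mathrm{Hom}(E,\bigwedge^jS)\ne0$; it does not single out a $\psi$ for which $\psi\circ\phi\ne0$. Your final paragraph acknowledges this and asserts that (iii) will make the Yoneda product nonzero, but no mechanism is given. The paper avoids this obstacle entirely by a different organization: it takes the Serre-dual class $\beta\in H^{n-1+j}(E^\vee\otimes S^{n-1-j}Q(-n))$ paired with $\alpha$ against the generator of $H^{2n-2}(\omega_G)$, and then lifts $\beta$ step by step through $(R^\vee_{n-1-j})\otimes E^\vee(-1)$, $(R_j)\otimes E^\vee(-j)$, and $(R^\vee_j)\otimes E^\vee$, using the Serre-dual forms of (iii), (iv), (v) respectively. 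At \emph{each} lift one checks, via a small commutative square whose right-hand column is an isomorphism of one-dimensional spaces coming from Remark~\ref{coomologia}, that the image of $\alpha'\otimes\beta^{(k)}$ is still the unit element. The final lift $\beta'''\in H^0(E^\vee\otimes\bigwedge^jS)$ then satisfies $\alpha'\otimes\beta'''\mapsto\mathrm{id}$ by construction, so $\psi\circ\phi=\mathrm{id}$ with no residual obstruction to analyze.

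In short: work with $\beta$ in the cohomology of $E^\vee$ rather than trying to Serre-dualize at the end, use (iii) as part of the lifting of $\beta$ (not as a separate nonvanishing argument), and carry the pairing $\alpha'\otimes\beta^{(k)}\mapsto\text{unit}$ along with each lift. That is the missing idea.
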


\begin{proof}
By (i), we can take a nonzero element $\alpha\in H^{n-1-j}(E\otimes
S^{n-1-j}Q(-n+j))$. By Serre duality, there exists 
$\beta\in H^{n-1+j}(E^\vee\otimes S^{n-1-j}Q^\vee(-j-1))=
H^{n-1+j}(E^\vee\otimes S^{n-1-j}Q(-n))$, such that the image
of
$\alpha\otimes\beta$ in $H^{2n-2}(\O{G}(-n-1))\cong
H^{2n-2}(S^{n-1-j}Q\otimes S^{n-1-j}Q^\vee(-n-1))$ is the natural
generator (i.e. the dual to the unit of $H^0(\O{G})$). Taking
cohomology in $(R^\vee_{n-1-j})$ and tensorizing with $E(-1)\otimes
H^{n-1+j}(E^\vee\otimes S^{n-1-j}Q^\vee(-j-1))$ and
$S^{n-1-j}Q^\vee(-j)$ we get a commutative diagram:
$$
H^0(E\otimes\bigwedge^jS^\vee)
\otimes H^{n-1+j}(E^\vee\otimes S^{n-1-j}Q(-n))\ \longrightarrow\
H^{n-1+j}(\bigwedge^jS^\vee\otimes S^{n-1-j}Q^\vee(-j-1))$$
$${}\hspace{2cm}\downarrow\sigma\otimes id\hspace{7cm}\downarrow$$
$$H^{n-1-j}(E\otimes S^{n-1-j}Q(-n+j))
\otimes H^{n-1+j}(E^\vee\otimes S^{n-1-j}Q(-n))\ \longrightarrow\
H^{2n-2}(\O{G}(-n-1))
$$
with natural horizontal arrows. We derive from Remark
\ref{coomologia} that the right arrow is an isomorphism of
one-dimensional vector spaces, while condition (ii) implies that
$\sigma$ is an epimorphism. We can thus find $\alpha'\in
H^0(E\otimes\bigwedge^jS^\vee)$ such that $\alpha'\otimes\beta$ maps
to the unit element in $H^{n-1+j}(\bigwedge^jS^\vee\otimes
S^{n-1-j}Q^\vee(-j-1))$.

\bigskip

On the other hand, using Serre duality, the vanishings of (iii) are
equivalent, respectively, to
$$H^{n-1+j}(E^\vee\otimes S^{n-2-j}Q(-n+1))
=H^{n-2+j}(E^\vee\otimes S^{n-3-j}Q(-n+2))
=\dots=H^{2j+1}(E^\vee(-j-1))
=0.$$
In the same way as above, if we consider sequence ($R^\vee_{n-1-j})$ tensored by $E^\vee(-1)$, this shows that $\beta$ lifts to an
element $\beta'\in H^{2j}(E^\vee\otimes\bigwedge^jS^\vee(-j))$ such
that the image of $\alpha'\otimes\beta'$ in
$H^{2j}(\bigwedge^jS^\vee\otimes\bigwedge^jS^\vee(-j))$ is the unit
element. 

\bigskip

Similarly, the vanishings of (iv) are equivalent to
$$H^{2j}(E^\vee(-j))=H^{2j-1}(E^\vee\otimes Q(-j))=
\dots=H^{j+1}(E^\vee\otimes S^{j-1}Q(-j))=0$$
so, if we consider sequence ($R_{j})$ tensored by $E^\vee(-j)$, we see that $\beta'$ can be lifted to $\beta''\in H^j(E^\vee\otimes
S^jQ(-j))$ such that the image of $\alpha'\otimes\beta''$ in
$H^j(\bigwedge^jS^\vee\otimes S^jQ(-j))$ is the unit element.

\bigskip

Finally, the vanishings of (v) are equivalent to 
$$H^j(E^\vee\otimes S^{j-1}Q(-j+1))
=H^{j-1}(E^\vee\otimes S^{j-2}Q(-j+2))=
\dots=H^1(E^\vee)=0$$
which imply that $\beta''$ can be lifted to $\beta'''\in
H^0(E^\vee\otimes\bigwedge^j S)$ such that the image of
$\alpha'\otimes\beta'''$ in
$H^0(\bigwedge^j S^\vee\otimes\bigwedge^j S)$ is the unit element (use sequence ($R^\vee_{j})$ tensored by $E^\vee$. But
this is nothing but saying that, regarding $\alpha'$ as a morphism
$\bigwedge^j S\to E$ and regarding $\beta'''$ as a morphism
$E\to\bigwedge^j S$, their composition is the identity in
$\bigwedge^j S$. In other words, $\bigwedge^j S$ is a direct summand
of $E$, as wanted.
\end{proof}

\begin{theorem}\label{caratt-sim} Let $n\ge3$ and fix
$j\in\{1,\dots,n-2\}$. Let $E$ be a vector bundle on $G=G(1,n)$ such
that:
\begin{itemize}
\item[(i)] $H^{n-1}(E\otimes S^{n-1-j}Q(-n))\ne0$
\item[(ii)] $H^1(E\otimes S^{j-1}Q(-j))=\dots=H^j(E(-j))=0$;
\item[(iii)] $H^{j+1}(E(-j-1))=\dots=H^{n-1}(E\otimes
S^{n-2-j}Q(-n+1) )=0$;
\item[(iv)] $H^{n-1}(E\otimes S^{n-2-j}Q(-n) )= \dots =
H^{2n-3-j}(E(-n))=0$;
\item[(v)] $H^{2n-2-j}(E(-n-1))=\dots =H^{2n-3}(E\otimes S^{j-1}
Q(-n-j))=0$.
\end{itemize}
\smallskip
Then $E$ contains $S^j Q$ as a direct summand.
\end{theorem}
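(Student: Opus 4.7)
My plan is to mirror the proof of Theorem~\ref{caratt-wed}, systematically replacing $\bigwedge^jS$ by $S^jQ$ and swapping the Koszul-type resolutions accordingly. Concretely, I will construct morphisms $\sigma:S^jQ\to E$ and $\tau:E\to S^jQ$ whose composition is a nonzero element of $H^0(\mathrm{End}(S^jQ))$; since \eqref{sym-sym} gives $\mathrm{End}(S^jQ)=S^jQ\otimes S^jQ(-j)=\bigoplus_{k=0}^{j}S^{2j-2k}Q(k-j)$, whose only summand with nonzero $H^0$ is the $\sO_G$ coming from $k=j$, any such nonzero composition is a scalar multiple of $\mathrm{id}_{S^jQ}$, so $S^jQ$ will split off $E$.

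First, I use hypothesis~(i) to pick a nonzero $\alpha\in H^{n-1}(E\otimes S^{n-1-j}Q(-n))$. By Serre duality choose $\beta\in H^{n-1}(E^\vee\otimes S^{n-1-j}Q(-n+j))$ so that the pairing $\alpha\otimes\beta$, composed with the trace $E\otimes E^\vee\to\sO_G$ and with the evaluation $S^{n-1-j}Q\otimes (S^{n-1-j}Q)^\vee\to\sO_G$ (equivalently, the projection onto the top summand $\sO_G(n-1-j)$ in the decomposition of $S^{n-1-j}Q\otimes S^{n-1-j}Q$ from Remark~\ref{coomologia}), yields the canonical generator of $H^{2n-2}(\sO_G(-n-1))\cong H^{2n-2}(\omega_G)$.

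I then lift $\alpha$ to $\sigma\in H^0(E\otimes S^jQ(-j))=\mathrm{Hom}(S^jQ,E)$ in two stages. \emph{Stage A:} take $(R^\vee_{n-1-j})$ twisted by $\sO_G(-j-1)$ and tensor with~$E$; via~\eqref{dual-S} this is a length-$(n-j)$ exact sequence running from $E\otimes S^{n-1-j}Q(-n)$ to $E\otimes\bigwedge^jS^\vee(-j)$, and the middle-term vanishings needed to push $\alpha$ to $\alpha'\in H^j(E\otimes\bigwedge^jS^\vee(-j))$ are precisely those listed in~(iii). \emph{Stage B:} take $(R_j)$ twisted by $\sO_G(-j)$ and tensor with~$E$; this is a length-$(j+1)$ sequence from $E\otimes\bigwedge^jS^\vee(-j)$ to $E\otimes S^jQ(-j)$, whose middle-term vanishings are exactly~(ii), producing~$\sigma$. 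An entirely parallel construction on the dual side lifts $\beta$ to $\tau\in H^0(E^\vee\otimes S^jQ)=\mathrm{Hom}(E,S^jQ)$: \emph{Stage C} uses $(R^\vee_{n-1-j})$ twisted by $\sO_G(-1)$ tensored with~$E^\vee$, whose required vanishings become~(iv) through Serre duality; \emph{Stage D} uses $(R_j)$ tensored with~$E^\vee$, whose required vanishings become~(v) through Serre duality.

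To conclude I would imitate the commutative-diagram argument of Theorem~\ref{caratt-wed}: at each of the four stages there is a diagram recording the compatibility of the Serre-duality pairing with the connecting map of the relevant sequence, and chasing $\alpha\otimes\beta$ through the four diagrams exhibits $\tau\circ\sigma$ as a nonzero element of $H^0(\mathrm{End}(S^jQ))$, hence as a nonzero scalar multiple of $\mathrm{id}_{S^jQ}$. The main technical hurdle is writing down these four pairing diagrams carefully and verifying that they compose consistently; once that is done, hypotheses (ii)–(v) provide exactly the vanishings that make every relevant connecting map surjective, and both the lifts and the pairing chase go through.
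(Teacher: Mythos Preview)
Your proposal is correct and follows essentially the same route as the paper's own proof: pick Serre-dual classes $\alpha,\beta$ from hypothesis~(i), lift $\alpha$ in two stages via $(R^\vee_{n-1-j})\otimes E(-j-1)$ and $(R_j)\otimes E(-j)$ to a morphism $S^jQ\to E$, lift $\beta$ symmetrically via $(R^\vee_{n-1-j})\otimes E^\vee(-1)$ and $(R_j)\otimes E^\vee$ to a morphism $E\to S^jQ$, and conclude that their composition is a nonzero scalar. In fact your assignment of hypotheses to stages is the correct one---the paper's printed proof has the references to (ii) and (iii) interchanged and misprints the degree of $\alpha'$ (it lies in $H^{j}(E\otimes\bigwedge^jS^\vee(-j))$, as you write, not $H^{n-1-j}$).
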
 

\begin{proof} We proceed as in the proof of Theorem
\ref{caratt-wed}. By condition (i), we can take a nonzero element
$\alpha\in H^{n-1}(E\otimes S^{n-1-j}Q(-n))$ and its Serre dual
$\beta\in H^{n-1}(E^{\vee}\otimes S^{n-1-j}Q^{\vee}(-1))=
H^{n-1}(E^{\vee}\otimes S^{n-1-j} Q(-n+j))$.

Condition (ii), together with $(R^\vee_{n-1-j})$ tensored by
$E(-j-1)$, implies that we can lift $\alpha$ to
$\alpha'\in H^{n-1-j}(E\otimes\bigwedge^{n-1-j}S(-j-1))=
H^{n-1-j}(E\otimes\bigwedge^jS^\vee(-j))$. Moreover, (iii)
together with $(R_j)$ tensored by $E(-j)$ implies that we
can lift $\alpha'$ to $\alpha''\in H^0(E\otimes S^j
Q(-j))={\rm Hom}(S^j Q,E)$.

On the other hand, writing condition (iv) as
$$H^{n-1}(E^\vee\otimes S^{n-2-j}Q(-n+1-j))= \dots =
H^{j+1}(E^\vee(-1))=0$$
and taking cohomology in $(R^\vee_{n-1-j}$ tensored with
$E^\vee(-1)$, we see that we can lift $\beta$ to $\beta'\in
H^j(E^\vee\otimes\bigwedge^{n-1-j}S(-1))
=H^j(E^\vee\otimes\bigwedge^jS^\vee)$. Writing also condition
(v) as
$$H^j(E^\vee)=\dots =H^1(E^\vee\otimes S^{j-1}Q)=0$$
and taking cohomology in $(R_j)$ tensored with $\E^\vee$ we
get that we can lift $\beta'$ to $\beta''\in
H^0(E^{\vee}\otimes S^jQ)={\rm Hom}(E,S^jQ)$. 

Moreover, $\alpha'',\beta''$ are still dual to each other,
which means that, regarded as morphisms, there composition
is the identity in $S^jQ$. Hence $S^jQ$ is a direct
summand of $E$.
\end{proof}

\begin{theorem}\label{mt} Let $E$ be a vector bundle on
$G=G(1,n)$ with $n\ge3$. Then $E$ is a direct sum of twists of
vector bundles of the form
$\O{G}$, $Q$ or $\bigwedge^jS$ with $j\in\{1,\dots,n-2\}$ if and
only if the following conditions hold:
\begin{enumerate}
\item[(a)] $H^1_*(E)=H^2_*(E\otimes Q)=\dots=H^{n-2}_*(E\otimes
S^{n-3}Q)=0$;
\item[(b)] $H^n_*(E\otimes S^{n-3}
Q)=\dots=H^{2n-4}_*(E\otimes Q)=H^{2n-3}_*(E)=0$;
\item[(c)] for each $j=1,\dots n-2$, \medskip \\
$H^{n-1-j}_*(E\otimes
S^{n-2-j}Q)=H^{n-j}_*(E\otimes S^{n-3-j}Q)
=\dots=H^{2n-3-2j}_*(E)=\medskip \\ 
=H^{2n-2-2j}_*(E)=H^{2n-1-2j}_*(E\otimes
Q)=\dots=H^{2n-3-j}_*(E\otimes S^{j-1}Q)=0$;
\item[(d)] $H^2_*(E)=H^3_*(E\otimes Q)=\dots=H^{n-1}_*(E\otimes
S^{n-3}Q)=\medskip\\ 
=H^n(E\otimes S^{n-4}_*Q)=\dots=H^{2n-4}_*(E)=0$.
\end{enumerate}
\end{theorem}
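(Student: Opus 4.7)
The forward implication is straightforward: by Remark \ref{coomologia} each of the allowed summands ($\O{G}$, $Q$, and $\bigwedge^jS$ for $j\in\{1,\ldots,n-2\}$) carries intermediate cohomology only at very specific positions, and a direct inspection shows that none of its twists contributes to the groups listed in (a)--(d). I therefore concentrate on the converse.

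I propose to argue by induction on the rank of $E$, extracting at each step a direct summand of one of the three allowed forms and observing that its complement again satisfies (a)--(d) (since every listed cohomology group splits with respect to a direct sum decomposition of $E$). By Proposition \ref{newreg}(i) the hypotheses are preserved by positive twists, and standard Serre vanishing ensures that $E$ has finite $G$-regularity; so after replacing $E$ by a suitable twist I may assume that $E$ is $G$-regular but $E(-1)$ is not. Proposition \ref{newreg}(v) then gives that $E$ is globally generated. Unwinding Definition \ref{d1}, the failure of $G$-regularity for $E(-1)$ forces at least one of the following groups to be nonzero: (A) $H^i(E\otimes S^iQ(-i-1))$ with $i\in\{1,\ldots,n-2\}$; (B) $H^{n-1+j'}(E\otimes S^{n-2-j'}Q(-n))$ with $j'\in\{0,\ldots,n-2\}$; or (C) $H^{2n-2}(E(-n-1))$.

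The heart of the argument is then a case analysis matching each possibility to a splitting criterion. If (C) holds, Serre duality yields a nonzero element of $H^0(E^\vee)$; combined with the global generation of $E$, the argument at the end of the proof of Theorem \ref{eg} produces a splitting $E\cong\O{G}\oplus E'$. If (A) holds, set $j=n-1-i\in\{1,\ldots,n-2\}$; the nonvanishing is precisely condition (i) of Theorem \ref{caratt-wed}, and I will check that the vanishing conditions (ii), (iii), (iv), (v) of that theorem are contained, respectively, in (a), in the first half of (c) for this $j$, in the second half of (c) for this $j$, and in (b). Hence Theorem \ref{caratt-wed} applies and $\bigwedge^jS$ is a direct summand of $E$. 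If (B) holds with $j'=0$, the nonvanishing is condition (i) of Theorem \ref{caratt-sim} with $j=1$, and the corresponding (ii)--(v) follow from (a), the two halves of (d), and (b), so $Q$ splits off. Finally, (B) with $j'\ge 1$ is impossible: setting $k=j'-1\in\{0,\ldots,n-3\}$ it contradicts the vanishing $H^{n+k}_*(E\otimes S^{n-3-k}Q)=0$ in (b).

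The main obstacle I expect is the bookkeeping required to align the (re-indexed) lists of vanishings appearing in Theorems \ref{caratt-wed} and \ref{caratt-sim} with the families (a)--(d), confirming term by term that every required vanishing does lie among our hypotheses (this is essentially a set of substitutions of the form $j=n-1-i$, $k=j'-1$, etc., together with one or two appeals to Serre duality combined with \eqref{dual-Q}). Once this alignment is established, the induction is automatic: the complement $E'$ has strictly smaller rank and inherits (a)--(d), so the process terminates after finitely many steps and writes $E$ as the desired direct sum.
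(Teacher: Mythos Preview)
Your proposal is correct and follows essentially the same route as the paper: normalize so that $E$ is $G$-regular but $E(-1)$ is not, read off which vanishing in Definition~\ref{d1} fails (using (b) to eliminate the cases you label (B) with $j'\ge1$), and then in each remaining case invoke Theorem~\ref{caratt-wed}, Theorem~\ref{caratt-sim} with $j=1$, or the Serre-duality/global-generation argument to split off the appropriate summand and recurse. Your bookkeeping of which hypotheses (a)--(d) feed which conditions of Theorems~\ref{caratt-wed} and~\ref{caratt-sim} matches the paper's, and your explicit handling of the impossible cases (B) with $j'\ge1$ is exactly what the paper means by ``having in mind (b)''.
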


\begin{proof} It follows from Remark \ref{coomologia} that a direct
sum of twists of  $\O{G}$, $Q$ or $\bigwedge^jS$ satisfies (a), (b),
(c), (d), so that we need to prove the converse. After a twist, we
can assume that $E$ is $G$-regular but $E(-1)$ not. Since $E(-1)$ is
not $G$-regular, and having in mind (b), one of the following
conditions is satisfied:
\begin{enumerate}
\item[(i)] $H^{n-1-j}(E\otimes S^{n-1-j}Q(-n+j) )\ne0$ for some
$j\in\{1,\dots,n-2\}$;
\item[(ii)] $H^{n-1}(E\otimes S^{n-2}Q(-n))\ne0$;
\item[(iii)] $H^{2n-2}(E(-n-1))\ne0$.
\end{enumerate}

In case (i), we are in the hypothesis of Theorem \ref{caratt-wed}
(condition (ii) follows from (a), conditions (iii) and (iv) follow
from (c) and condition (v) follows from (b)). Hence we can write
$E=\bigwedge^jS\oplus E'$ for some other vector bundle $E'$.

\medskip

In case (ii), we are in the hypotheses of Theorem \ref{caratt-sim}
with $j=1$ (condition (ii) follows from (a), conditions (iii) and
(iv) follow from (d) and condition (v) follows from (b)). We can
thus write $E=Q\oplus E'$.

\medskip

Finally, in case (iii) we have, by Serre duality, $H^0(E^\vee)\ne0$.
Since $E$ is generated by its global sections (by Proposition
\ref{newreg}), it follows that we can write $E=\O{G}\oplus E'$.

\medskip

In either case, the new vector bundle $E'$ still satisfies the
hypotheses (a), (b), (c), (d), so that we can conclude by a
recursive argument on the rank.
\end{proof}

\begin{example}{\rm If $n=3$, the hypotheses of Theorem \ref{mt}
reduce to the fact that $E$ has no intermediate cohomology, and we
recover the classification of the ACM bundles on $\Q_4$ proved in
\cite{Kn}.
}\end{example}

\begin{example}{\rm If $n=4$, 
Theorem \ref{mt} characterizes the direct sums of twists of
$\O{G},S,S^\vee$ as those vector bundles $E$ without intermediate
cohomology and such that 
$$H^2_*(E\otimes Q)=H^3_*(E\otimes Q)=H^4_*(E\otimes Q)=0$$
so that we recover \cite{AG} Theorem 2.4.
}\end{example}

\begin{remark}{\rm
It is clear that, for example, in order to characterize direct sums
of line bundles and twists of $Q$, we need to remove condition (c)
in Theorem \ref{mt}, although we will need  more vanishings in (a).
Hence in general, we will need a smaller number of conditions to
characterize more restrictive bundles.

On the other hand, we could have also proceeded as in Theorem
\ref{eg} and use Le Potier vanishing theorem to improve Theorem
\ref{mt} (or any of the variants we just indicated). We preferred
not to do it explicitly, since it represents a small improvement
compared with the difficulty to write it in a clear way.
}\end{remark}

\end{document}